\setlist[enumerate]{itemsep=2pt,topsep=3pt}
\setlist[itemize]{itemsep=2pt,topsep=3pt}
\setlist[enumerate,1]{label=(\alph*)}
\renewcommand{\leq}{\leqslant}
\renewcommand{\geq}{\geqslant}
\setlist[enumerate]{itemsep=2pt,topsep=3pt}
\setlist[itemize]{itemsep=2pt,topsep=3pt}
\setlist[enumerate,1]{label={\upshape (\roman*)}}
\newcommand{\setntn}[2]{ \{ #1 : #2 \} }
\newcommand{\fore}{\therefore \quad}
\newcommand{\preqsd}{\preceq_{s} }
\newcommand{\eqdist}{\stackrel {\scriptsize{d}} {=} }
\newcommand{\iidsim}{\stackrel {\textrm{ {\sc iid }}} {\sim} }
\newcommand{\1}{\mathbbm 1}
\newcommand{\given}{\, | \,}
\newcommand*\diff{\mathop{}\!\mathrm{d}}
\let\emptyset\varnothing
\newcommand{\cC}{\mathscr C}
\newcommand{\bB}{\mathcal B}
\newcommand{\fF}{\mathscr F}
\newcommand{\RR}{\mathbbm R}
\newcommand{\QQ}{\mathbbm Q}
\newcommand{\NN}{\mathbbm N}
\newcommand{\GG}{\mathbbm G}
\newcommand{\PP}{\mathbbm P}
\newcommand{\EE}{\mathbbm E}
\newcommand{\XX}{\mathbbm X}
\newcommand{\WW}{\mathbb W}
\renewcommand{\phi}{\varphi}
\renewcommand{\epsilon}{\varepsilon}
\theoremstyle{plain}
\newtheorem{theorem}{Theorem}[section]
\newtheorem{lemma}[theorem]{Lemma}
\theoremstyle{definition}
\newcommand{\navy}[1]{\textcolor{Brown}{\emph{#1}}}
\begin{document}

\title[Stochastically Monotone Markov Chains]{Quantitative Convergence Rates for Stochastically Monotone Markov Chains}

\author{Takashi Kamihigashi}
\author{John Stachurski}

\date{\today}

%\thanks{}

\begin{abstract}
For Markov chains and Markov processes exhibiting a form of stochastic monotonicity (larger states shift up transition probabilities in terms of stochastic dominance), stability and ergodicity results can be obtained using order-theoretic mixing conditions.  We complement these results by providing quantitative bounds on deviations between distributions.  We also show that well-known total variation bounds can be recovered as a special case.
\end{abstract}

\maketitle

\section{Introduction}

Quantitative bounds on the distance between distributions generated by Markov
chains have many applications in statistics and the natural and social sciences
(see, e.g., \cite{rosenthal2023markov, montenegro2006mathematical}). One
approach uses total variation distance and exploits minorization conditions
(see, e.g., \cite{rosenthal2002quantitative, jiang2021coupling,
bardet2013total}).  Another branch of the literature bounds deviations using
Wasserstein distance \citep{chafai2010long, qin2022geometric, qu2023computable}.
These bounds require some form of uniform continuity with respect to a metric on
the state space.

In some applications, Markov chains lack both the minorization and continuity
properties discussed above, making total variation and Wasserstein-type bounds
difficult or impossible to apply. Fortunately, some of these models also possess
valuable structure in the form of stochastic monotonicity. Such monotonicity can
be exploited to obtain stability and ergodicity via order-theoretic versions of
mixing conditions \citep{bhattacharya1988asymptotics, bhattacharya1999theorem,
    foss2004overview, bhattacharya2010limit, kamihigashi2014stochastic,
foss2018stochastic, kamihigashi2019unified}. In this paper we complement these
stability and ergodicity results by providing a theorem on quantitative bounds
for stochastically monotone Markov chains.  

There already exist several results that use stochastic monotonicity to 
bound the distributions generated by Markov chains
\cite{lund1996computable, gaudio2021exponential}. However, these bounds are
typically stated in terms of total variation distance, which again requires
traditional minorization conditions (as opposed to the order-theoretic mixing
conditions discussed in the last paragraph).  In this paper, we aim to fully
exploit the monotonicity by instead bounding
Kolmogorov distance between distributions.  This works well because Kolmogorov
distance respects order structure on the state space.

Our main theorem is closely related to the total variation bound in Theorem~1 of
\cite{rosenthal2002quantitative}, which is representative of existing work on
total variation bounds and supplies a simple and elegant proof.  The main
differences between that theorem and the one presented below is that we use 
Kolmogorov distance instead of total variation distance and an
order-theoretic mixing condition instead of a standard minorization condition.
At the same time, it is possible to recover Theorem~1 of
\cite{rosenthal2002quantitative} from the result we present below by a
particular choice of partial order (see Sections~\ref{ss:tv} and \ref{ss:wmf}).
Thus our work can be viewed as a generalization of existing total variation results.

\section{Set Up}

In this section we recall basic definitions and state some preliminary results.

\subsection{Environment}

Throughout this paper, $\XX$ is a Polish space,
 $\bB$ is its Borel sets, and $\preceq$ is a closed partial order on
$\XX$.  The last statement means that the graph of $\preceq$, denoted by
\begin{equation*}
    \GG \coloneq \setntn{(x', x) \in \XX \times \XX}{x' \preceq x},
\end{equation*}
is closed under the product topology on $\XX \times \XX$.
A map $h \colon \XX \to \RR$ is called \navy{increasing} if $x \preceq x'$ implies $h(x) \leq
h(x')$. 
We take $p\bB$ to be the
set of all probability measures on $\bB$ and let $b\bB$ be the bounded Borel
measurable functions sending $\XX$ into $\RR$. 
The symbol $ib\bB$ represents all increasing $h \in b\bB$. 

Given $\mu, \nu$ in $p\bB$, we say that $\mu$ is \navy{stochastically dominated}
by $\nu$ and write $\mu \preqsd \nu$ if $\mu(h) \leq \nu(h)$ for all $h \in
ib\bB$. In addition, we set
\begin{equation}\label{eq:defk}
    \kappa(\mu, \nu) \coloneq \sup 
    \left\{
        \left| \, \int h d \mu - \int h d \nu  \,\right|
        \, : \,
        h \in ib\bB \text{ and } 0 \leq h \leq 1
    \right\},
\end{equation}
which corresponds to the Kolmogorov metric on $p\bB$
\cite{kamihigashi2019unified, gaunt2023bounding}.

A function $Q \colon (\XX, \bB) \to
\RR$ is called a \navy{stochastic kernel} on $(\XX, \bB)$ if $Q$ is a map from
$\XX \times \bB$ to $[0,1]$ such that that $x \mapsto Q(x, A)$ is measurable for
each $A \in \bB$ and $A \mapsto Q(x, A)$ is a probability measure on $\bB$ for
each $x \in \XX$.  At times we use the symbol $Q_x$ to represent the
distribution $Q(x, \cdot)$ at given $x$.  A stochastic kernel $Q$ on $(\XX,
\bB)$ is called \navy{increasing} if $Qh \in ib\bB$ whenever $h \in ib\bB$.

For a given stochastic kernel $Q$ on $(\XX, \bB)$, we define the \navy{left and
right Markov operators} generated by $Q$ via 
\begin{equation*}
    \mu Q(A) \coloneq \int Q(x, A) \mu (\diff x)
    \quad \text{and} \quad
    Qf(A) \coloneq \int f(y) Q(x, \diff y).
\end{equation*}
(The left Markov operator $\mu \mapsto \mu Q$ maps $p\bB$ to itself, while the
right Markov operator $f \mapsto Qf$ acts on bounded measurable functions.) A
discrete-time $\XX$-valued stochastic process $(X_t)_{t \geq 0}$ on a filtered
probability space $(\Omega, \fF, \PP, (\fF_t)_{t \geq 0})$ is called \navy{Markov-$(Q,
\mu)$} if $X_0 \eqdist \mu$ and
\begin{equation*}
    \EE [ h(X_{t+1}) \,|\, \fF_t ] = Qh(X_t)
    \text{ with probability one for all $t \geq 0$ and $h \in b\bB$}.
\end{equation*}

\subsection{Couplings}

A \navy{coupling} of $(\mu, \nu) \in p\bB \times p\bB$ is a probability measure
$\rho$ on $\bB \otimes \bB$ satisfying
$\rho(A \times \XX) = \mu(A) $ and $\rho(\XX \times A) = \nu(A)$
for all $A \in \bB$.  
Let $\cC(\mu, \nu)$ denote the set of all couplings of
$(\mu, \nu)$  and let
\begin{equation}\label{eq:alo}
    \alpha(\mu, \nu)
    \coloneq \sup_{\rho \in \cC(\mu, \nu)} \rho(\GG)
    \qquad ((\mu, \nu) \in p\bB \times p\bB).
\end{equation}
The value $\alpha(\mu, \nu)$ lies in $[0, 1]$ and can be understood as a
measure of ``partial stochastic dominance'' of $\nu$ over $\mu$
\cite{kamihigashi2020partial}.   By the Polish assumption and Strassen's theorem 
\citep{strassen1965existence, lindvall2002lectures} we have
\begin{equation}\label{eq:st}
    \alpha(\mu, \nu) = 1 
    \quad \text{whenever} \quad
    \mu \preqsd \nu.
\end{equation}

Let $Q$ be a stochastic kernel on $(\XX, \bB)$ and let 
$\hat Q$ be a stochastic kernel on $(\XX \times \XX, \bB \otimes \bB)$.  We call
$\hat Q$ a \navy{Markov coupling} of $Q$ if $\hat Q_{(x, x')}$ is a coupling of $Q_x$ and
$Q_{x'}$ for all $x, x' \in \XX$.  We call
$\hat Q$ a \navy{$\preceq$-maximal Markov coupling} of $Q$ if $\hat Q$
is a Markov coupling of $Q$ and, in addition,
\begin{equation}\label{eq:hq}
    \hat Q((x, x'), \GG) = \alpha(Q_x, Q_{x'})
    \quad \text{for all } (x,x') \in \XX \times \XX.
\end{equation}

\begin{lemma}\label{l:pe}
    For any stochastic kernel $Q$ on
    $(\XX, \bB)$, there exists a $\preceq$-maximal Markov coupling of $Q$.
\end{lemma}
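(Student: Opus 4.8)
The plan is to construct $\hat Q$ in two stages. First, for each fixed pair $(x,x')$, produce a coupling of $Q_x$ and $Q_{x'}$ that attains the supremum defining $\alpha(Q_x,Q_{x'})$ in \eqref{eq:alo}. Second, show that these maximal couplings can be chosen to depend measurably on $(x,x')$, so that $\hat Q((x,x'),A):=\hat Q_{(x,x')}(A)$ defines a genuine stochastic kernel on $(\XX\times\XX,\bB\otimes\bB)$, which by construction is then a $\preceq$-maximal Markov coupling of $Q$.

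For the first stage, fix $(x,x')$ and set $\mu:=Q_x$, $\nu:=Q_{x'}$. Since $\XX$ is Polish, $\mu$ and $\nu$ are tight, and a standard argument (cover each marginal by a compact set and use the marginal bounds $\rho(B^c\times\XX)=\mu(B^c)$, $\rho(\XX\times B^c)=\nu(B^c)$) shows that $\cC(\mu,\nu)$ is uniformly tight; it is also weakly closed and nonempty (it contains $\mu\otimes\nu$), hence weakly compact by Prokhorov's theorem. Because $\GG$ is closed, the portmanteau theorem gives that $\rho\mapsto\rho(\GG)$ is upper semicontinuous for weak convergence, so it attains its maximum over the compact set $\cC(\mu,\nu)$. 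Thus $\alpha(\mu,\nu)=\max_{\rho\in\cC(\mu,\nu)}\rho(\GG)$ and the maximizer set $M(x,x')$ is nonempty and weakly compact.

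For the second stage, note that, $Q$ being a stochastic kernel, $x\mapsto Q_x$ is Borel measurable from $\XX$ into $p\bB$ endowed with its (Polish) weak topology, so $(x,x')\mapsto(Q_x,Q_{x'})$ is Borel; reducing the marginal constraints to a countable generating algebra of $\bB$ then shows that the graph $\{((x,x'),\rho):\rho\in\cC(Q_x,Q_{x'})\}$ is Borel and that $(x,x')\mapsto\cC(Q_x,Q_{x'})$ is a measurable, compact-valued correspondence. Since $\GG$ is closed, choose bounded continuous $g_n\downarrow\1_\GG$, so that $\rho(\GG)=\inf_n\rho(g_n)$ with each $\rho\mapsto\rho(g_n)$ weakly continuous; a Dini-type compactness argument gives $\alpha(Q_x,Q_{x'})=\inf_n\sup_{\rho\in\cC(Q_x,Q_{x'})}\rho(g_n)$, and each inner supremum is Borel in $(x,x')$ by the measurable maximum theorem, so $(x,x')\mapsto\alpha(Q_x,Q_{x'})$ is Borel and the graph of $(x,x')\mapsto M(x,x')$ is Borel with compact sections. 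A measurable selection theorem (Kuratowski--Ryll-Nardzewski, or the version for Borel graphs with compact sections) then supplies a Borel map $(x,x')\mapsto\hat Q_{(x,x')}\in p(\bB\otimes\bB)$ with $\hat Q_{(x,x')}\in M(x,x')$ for every $(x,x')$. Setting $\hat Q((x,x'),A):=\hat Q_{(x,x')}(A)$ gives a stochastic kernel on $(\XX\times\XX,\bB\otimes\bB)$ (measurability in $(x,x')$ because $\rho\mapsto\rho(A)$ is Borel on $p(\bB\otimes\bB)$ for each $A\in\bB\otimes\bB$); it is a Markov coupling of $Q$ because $\hat Q_{(x,x')}\in\cC(Q_x,Q_{x'})$; and it is $\preceq$-maximal because $\hat Q((x,x'),\GG)=\hat Q_{(x,x')}(\GG)=\alpha(Q_x,Q_{x'})$, which is exactly \eqref{eq:hq}.

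The soft pointwise existence is routine; the crux is the joint measurability. The one genuine subtlety there is that $\rho\mapsto\rho(\GG)$ is merely upper semicontinuous rather than continuous, so the textbook measurable maximum theorem does not apply directly to $\GG$ itself and must be routed through the continuous approximants $g_n$ together with the weak compactness of $\cC(Q_x,Q_{x'})$; once the value function $\alpha(Q_\cdot,Q_\cdot)$ and the argmax graph are shown to be Borel, the selection step is standard.
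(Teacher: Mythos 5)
Your argument is correct in substance, but it takes a genuinely different route from the paper: the paper's proof is a one-line application of Theorem~1.1 of Zhang (2000), which, for a lower semicontinuous cost $\phi$, directly supplies a Markov coupling $\hat Q$ attaining $\inf_{\rho\in\cC(Q_x,Q_{x'})}\int\phi\,d\rho$ pointwise; taking $\phi=1-\1_\GG$ (lower semicontinuous because $\GG$ is closed) immediately gives \eqref{eq:hq}. You instead rebuild that result from scratch: Prokhorov compactness of $\cC(Q_x,Q_{x'})$ plus upper semicontinuity of $\rho\mapsto\rho(\GG)$ for pointwise attainment, then a measurable-selection argument for joint measurability in $(x,x')$. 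What your version buys is self-containedness and an explicit identification of where the work lies (the measurability of the value function and of the argmax correspondence when the objective is only upper semicontinuous); what the paper's version buys is brevity, since Zhang's theorem packages exactly this optimization-plus-selection argument. Two small points of precision in your write-up: Kuratowski--Ryll-Nardzewski as usually stated requires weak measurability of a closed-valued correspondence, which does not follow from a Borel graph alone without completing the $\sigma$-algebra; the clean citation for your situation (Borel graph in a product of Polish spaces with compact, or $\sigma$-compact, sections) is the Arsenin--Kunugui uniformization theorem, and the same theorem is what justifies Borel measurability of $\sup_\rho\rho(g_n)$ as a projection. Your Dini-type interchange $\sup_\rho\inf_n=\inf_n\sup_\rho$ on the compact coupling set is correct and is the right way to handle the mere upper semicontinuity of $\rho\mapsto\rho(\GG)$.
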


\begin{proof}
    By Theorem~1.1 of
    \cite{zhang2000existence}, given 
    lower semicontinuous $\phi \colon \XX \times \XX \to \RR$,
    there exists a stochastic kernel $\hat Q$ on $(\XX \times \XX, \bB \otimes \bB)$ such that
    $\hat Q$ is a Markov coupling of $Q$ and, in addition
    \begin{equation*}
        (\hat Q \phi)(x, x')
        = \inf \left\{
            \int \phi \diff \rho 
            \, : \,
            \rho \in \cC(Q_x, Q_{x'})
        \right\}.
    \end{equation*}
    As $\GG$ is closed, this equality is attained when $\phi = 1 - \1_\GG$.  
    Since $\hat Q(_{(x, x')}$ and $\rho$ are probability measures,
    we then have
    \begin{equation*}
        \hat Q((x, x'), \GG) 
        = \sup
        \left\{
            \rho(\GG)
            \, : \,
            \rho \in \cC(Q_x, Q_{x'})
        \right\}.
    \end{equation*}
    Thus, $\hat Q$ is a $\preceq$-maximal Markov coupling of $Q$.
\end{proof}

\subsection{Drift}

Consider the geometric drift condition
\begin{equation}\label{eq:dc}
    QV(x) \leq \lambda V(x) + \beta
      \quad \text{for all } x \in \XX,
\end{equation}
where $Q$ is a stochastic kernel on $(\XX, \bB)$, $V$ is a measurable function
from $\XX$ to $[1,\infty)$, and $\lambda$ and $\beta$ are nonnegative
constants. We fix $d \geq 1$ and set
\begin{equation}\label{eq:gc}
    \gamma \coloneq \lambda + \frac{2 \beta}{d}
    \quad \text{and} \quad
    C \coloneq \setntn{x \in \XX}{V(x) \leq d}.
\end{equation}
Fix $\mu, \mu'$ in $p\bB$ and set
\begin{equation}\label{eq:h}
    H(\mu, \mu') \coloneq
    \frac{1}{2}
      \left[
          \int V \diff \mu + \int V \diff\mu'
      \right] .
\end{equation}
Let $\hat Q$ be a Markov coupling of $Q$ and let $((X_t, X'_t))_{t \geq 0}$ be
Markov-$(\hat Q, \mu \times \mu')$ on $(\Omega, \fF, \PP, (\fF_t)_{t \geq 0})$.
We are interested in studying the number of visits to $C \times C$, as given by
\begin{equation*}
    N_t \coloneq \sum_{j=0}^t \1 \{ (X_t, X'_t) \in C \times C \}. 
\end{equation*}

\begin{lemma}\label{l:ggc}
    If $Q$ satisfies the geometric drift condition \eqref{eq:dc}, then,
    for all $t \in \NN$ and all $j \in \NN$ with $j \leq t$, we have
    \begin{equation*}
        \PP\{ N_t < j \} \leq \gamma^t d^{j-1} H(\mu, \mu').
    \end{equation*}
\end{lemma}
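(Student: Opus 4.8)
The plan is to build a nonnegative supermartingale out of the drift function and the visit counter, and then extract the tail bound by Markov's inequality, in the spirit of the argument behind Theorem~1 of \cite{rosenthal2002quantitative}. We may assume $H(\mu, \mu') < \infty$, since otherwise there is nothing to prove; note also that $\gamma > 0$, because $V \geq 1$ makes $QV \geq 1$ and hence forces $\lambda + \beta > 0$.

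\emph{Step 1 (lift the drift to the coupled chain).} Set $\bar V(x, x') \coloneq \tfrac12[V(x) + V(x')]$. Since $\hat Q_{(x,x')}$ is a coupling of $Q_x$ and $Q_{x'}$, its marginals are $Q_x$ and $Q_{x'}$, so $\hat Q \bar V(x, x') = \tfrac12[QV(x) + QV(x')] \leq \lambda \bar V(x, x') + \beta$; that is, $\bar V$ obeys the same geometric drift under $\hat Q$. Moreover, if $(x, x') \notin C \times C$ then $V(x) > d$ or $V(x') > d$, and since $V \geq 1$ this gives $\bar V(x, x') > d/2$.

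\emph{Step 2 (the supermartingale).} With $N_{-1} \coloneq 0$, define
\[
    M_t \coloneq \gamma^{-t}\, d^{-N_{t-1}}\, \bar V(X_t, X'_t), \qquad t \geq 0,
\]
so that $M_0 = \bar V(X_0, X'_0)$ and $\EE[M_0] = H(\mu, \mu')$. I claim $(M_t)$ is an $(\fF_t)$-supermartingale. Since $N_{t-1}$ and $\1\{(X_t, X'_t) \in C \times C\}$ are $\fF_t$-measurable and $\EE[\bar V(X_{t+1}, X'_{t+1}) \mid \fF_t] = \hat Q \bar V(X_t, X'_t)$ (the Markov property, stated for bounded $h$, extends to nonnegative measurable ones such as $\bar V$ by conditional monotone convergence), the inequality $\EE[M_{t+1} \mid \fF_t] \leq M_t$ reduces, after dividing by $\gamma^{-t} d^{-N_{t-1}} > 0$ and invoking Step 1, to
\[
    \frac{1}{\gamma}\, d^{-\1\{(X_t, X'_t) \in C \times C\}}\, \bigl(\lambda \bar V(X_t, X'_t) + \beta\bigr) \leq \bar V(X_t, X'_t).
\]
On $C \times C$ the left factor is $1/(\gamma d)$ and the required inequality is $\lambda \bar V + \beta \leq \gamma d\, \bar V = (\lambda d + 2\beta)\bar V$, which holds because $d \geq 1$ and $\bar V \geq 1$. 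Off $C \times C$ the left factor is $1/\gamma$ and the required inequality is $\lambda \bar V + \beta \leq \gamma \bar V$, i.e.\ $\beta \leq \tfrac{2\beta}{d}\bar V$, which holds because $\bar V > d/2$ there. Hence $(M_t)$ is a nonnegative supermartingale, and integrability is immediate from the drift once $H(\mu,\mu')<\infty$, so $\EE[M_t] \leq \EE[M_0] = H(\mu, \mu')$ for all $t$.

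\emph{Step 3 (conclusion).} Fix $t$ and $j \leq t$. On $\{N_t < j\}$ we have $N_{t-1} \leq N_t \leq j - 1$, so $d^{-N_{t-1}} \geq d^{1-j}$ (as $d \geq 1$), and since $\bar V \geq 1$ this yields $M_t \geq \gamma^{-t} d^{1-j}$. Markov's inequality then gives
\[
    \PP\{N_t < j\} \leq \PP\{M_t \geq \gamma^{-t} d^{1-j}\} \leq \gamma^t d^{j-1}\, \EE[M_t] \leq \gamma^t d^{j-1}\, H(\mu, \mu'),
\]
which is the asserted bound. The only genuinely delicate point is the two-case verification in Step 2: one must notice that $\gamma = \lambda + 2\beta/d$ is calibrated precisely so that the ``off $C\times C$'' inequality closes via the slack $\bar V > d/2$, while the ``on $C\times C$'' inequality is paid for by the discount factor $1/d$. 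The remaining ingredients — the extension of the Markov property to the unbounded $\bar V$ (truncate $V$ at level $n$, apply for bounded functions, and let $n \to \infty$) — are routine.
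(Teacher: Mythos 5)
Your proof is correct and follows essentially the same route as the paper: the weighted supermartingale $M_t = \gamma^{-t} d^{-N_{t-1}} \bar V(X_t,X_t')$, the two-case drift verification on and off $C\times C$, and Markov's inequality. The only differences are cosmetic — your normalization $N_{-1}=0$ (which is in fact the one consistent with $\EE M_0 = H(\mu,\mu')$) and your justification of the on-$C\times C$ case via $d\geq 1$, $\bar V\geq 1$ rather than $\bar V\leq d$.
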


The result in Lemma~\ref{l:ggc} has already been used in other sources.
To make the paper more self-contained, we provide a proof in the appendix.  Our
proof is based on arguments in
\cite{rosenthal2002quantitative}.

\section{Convergence Rates}

Let $V$ be a measurable function from $\XX$ to $[1,\infty)$  and let $Q$ be a
stochastic kernel on $(\XX, \bB)$ satisfying the geometric drift condition
\eqref{eq:dc}. Fix $d \in \RR_+$ and let $C$ and $\gamma$ be as
defined in \eqref{eq:gc}. Let $H(\mu, \mu')$ be as given in \eqref{eq:h}. Let 
\begin{equation}\label{eq:defc}
    \epsilon \coloneq \inf
        \left\{
            \alpha(Q_x, Q_{x'})
         \, : \,
         (x, x') \in C \times C
        \right\}.
\end{equation}
We now state the main result.

\begin{theorem}\label{t:bk}
    If $Q$ is increasing, then, for any $j, t \in \NN$
    with $j \leq t$,  we have
    \begin{equation*}
        \kappa( \mu Q^t, \mu' Q^t)
          \leq 
              (1-\epsilon)^j + 
              \gamma^t d^{j-1} H(\mu, \mu').
    \end{equation*}
\end{theorem}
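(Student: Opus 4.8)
The plan is to run a $\preceq$-maximal Markov coupling of $Q$ started from $\mu \times \mu'$, use the fact that $Q$ is increasing to make the order relation absorbing along the coupled chain, and then bound the time at which the two coordinates become ordered by combining an ``$\epsilon$-success at each visit to $C \times C$'' argument with the drift estimate of Lemma~\ref{l:ggc}. So fix a $\preceq$-maximal Markov coupling $\hat Q$ of $Q$ (Lemma~\ref{l:pe}) and let $((X_t, X'_t))_{t \geq 0}$ be Markov-$(\hat Q, \mu \times \mu')$ on a filtered probability space; since $\hat Q$ has marginals $Q$, we have $X_t \eqdist \mu Q^t$ and $X'_t \eqdist \mu' Q^t$. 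The structural point is that $\GG$ is absorbing for this chain: if $X_s \preceq X'_s$ then, because $Q$ is increasing, $Q_{X_s} \preqsd Q_{X'_s}$, so $\alpha(Q_{X_s}, Q_{X'_s}) = 1$ by \eqref{eq:st}, and then \eqref{eq:hq} forces $\hat Q((X_s, X'_s), \GG) = 1$, i.e.\ $X_{s+1} \preceq X'_{s+1}$ almost surely; iterating, once the coordinates are ordered they stay ordered. Put $T \coloneq \inf \setntn{s \geq 0}{X_s \preceq X'_s}$, so that $X_t \preceq X'_t$ holds exactly on $\{T \leq t\}$.

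Next I would reduce the Kolmogorov distance to a tail probability for $T$. For increasing $h$ with $0 \leq h \leq 1$ one has $h(X_t) - h(X'_t) \leq \1\{X_t \not\preceq X'_t\} = \1\{T > t\}$, hence
\begin{equation*}
    \mu Q^t(h) - \mu' Q^t(h) = \EE\!\left[\,h(X_t) - h(X'_t)\,\right] \leq \PP\{T > t\}.
\end{equation*}
Running the same construction with the two coordinates (equivalently, $\mu$ and $\mu'$) interchanged bounds $\mu' Q^t(h) - \mu Q^t(h)$ in the same way. Since $\kappa$ is symmetric in its arguments and the bound we are aiming for is symmetric in $(\mu, \mu')$ — the function $H$ is symmetric and the set $C \times C$, hence the constant $\epsilon$, is invariant under swapping coordinates — taking the supremum over all such $h$ gives $\kappa(\mu Q^t, \mu' Q^t) \leq \PP\{T > t\}$.

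It remains to bound $\PP\{T > t\}$, which is the heart of the proof. Split $\PP\{T > t\} \leq \PP\{T > t,\, N_t \geq j\} + \PP\{N_t < j\}$. On $\{N_t \geq j\}$ let $\tau_1 < \cdots < \tau_j$ be the first $j$ times $s$ with $(X_s, X'_s) \in C \times C$; each $\tau_i$ is a stopping time, and by \eqref{eq:defc} together with \eqref{eq:hq},
\begin{equation*}
    \PP\{\, X_{\tau_i + 1} \preceq X'_{\tau_i + 1} \mid \fF_{\tau_i} \,\}
      = \alpha(Q_{X_{\tau_i}}, Q_{X'_{\tau_i}}) \geq \epsilon .
\end{equation*}
Because $\GG$ is absorbing, $\{T > t\}$ forces the relevant attempts to have failed, and a strong-Markov induction on $i = 1, \dots, j$ (conditioning successively at $\tau_1, \dots, \tau_j$ and using the tower property) bounds the probability that all $j$ attempts fail by $(1-\epsilon)^j$; hence $\PP\{T > t,\, N_t \geq j\} \leq (1-\epsilon)^j$. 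Lemma~\ref{l:ggc} supplies $\PP\{N_t < j\} \leq \gamma^t d^{j-1} H(\mu, \mu')$, and adding the two estimates yields the claimed inequality.

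I expect the last step — the bound $\PP\{T > t,\, N_t \geq j\} \leq (1-\epsilon)^j$ — to be the main obstacle. One must align a visit to $C \times C$ at time $\tau_i$ with membership of the coupled chain in $\GG$ at time $\tau_i + 1$, keep the geometric exponent correctly matched against the visit count $N_t$ used in Lemma~\ref{l:ggc} (in particular watching the indices at the endpoint $j = t$), and handle the filtration cleanly across the successive stopping times $\tau_1, \dots, \tau_j$. By contrast, the existence and measurability of the $\preceq$-maximal Markov coupling, the absorption argument, the reduction to $\PP\{T > t\}$, and the symmetrization are all short and follow directly from Lemma~\ref{l:pe}, \eqref{eq:st}, and \eqref{eq:hq}.
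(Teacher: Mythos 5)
Your proposal is correct and follows essentially the same route as the paper's proof: a $\preceq$-maximal Markov coupling started from $\mu \times \mu'$, absorption of $\GG$ via monotonicity of $Q$ and Strassen's theorem, reduction of $\kappa(\mu Q^t, \mu' Q^t)$ to the tail $\PP\{T > t\}$ of the coupling time, the split over $\{N_t \geq j\}$ and $\{N_t < j\}$ handled respectively by the strong-Markov geometric-trials argument at the successive visit times to $C \times C$ and by Lemma~\ref{l:ggc}, followed by symmetrization in $(\mu, \mu')$. The index bookkeeping at the endpoint that you flag as the main obstacle is handled in the paper exactly as you sketch (via the visit times $J_i$ and backward iteration of the conditional bound $1-\epsilon$), so no new idea is missing.
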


\begin{proof}
    Given $Q$ in Theorem~\ref{t:bk}, we let
    $\hat Q$ be a $\preceq$-maximal Markov coupling of $Q$ (existence of which
    follows from Lemma~\ref{l:pe}).  Let $((X_t, X'_t))_{t \geq 0}$ be Markov-$(\hat Q, \mu \times \mu')$ on
    $(\Omega, \fF, \PP, (\fF_t)_{t \geq 0})$.
    We observe that the graph $\GG$ of $\preceq$ is
    absorbing for $\hat Q$.  Indeed, if $(x, x') \in \GG$, then, since $Q$ is increasing, $Q(x, \cdot)
    \preqsd Q(x', \cdot)$. Hence, by \eqref{eq:st}, we have $\alpha(Q(x, \cdot), Q(x', \cdot)) = 1$.  
    Applying \eqref{eq:hq} yields $\hat Q((x,x'), \GG) = 1$. 

    Let $\tau$ be the stopping time $\tau \coloneq \inf \setntn{t \geq 0}{X'_t
    \preceq X_t}$  with $\inf \emptyset = \infty$.  Since $\GG$ is absorbing for
    $\hat Q$, we have $\PP\{X_t' \preceq X_t\} = 1$ whenever $t \geq \tau$. Let
    $h$ be any element of $ib\bB$ with $0 \leq h \leq 1$. Since $((X_t,
    X'_t))_{t \geq 0}$ is Markov-$(\hat Q, \mu \times \mu')$ and $\hat Q((x,x'),
    \cdot)$ is a coupling of $Q(x, \cdot)$ and $Q(x', \cdot)$, we have
    \begin{align*}
        (\mu' Q^t)(h) - (\mu Q^t)(h)
         & = \EE h(X'_t) - \EE h(X_t) \\
         & = \EE [h(X'_t) - h(X_t)] \1\{X'_t \preceq X_t\}
          +  \EE [h(X'_t) - h(X_t)] \1\{X'_t \preceq X_t\}^c .
    \end{align*}
    Since $h$ is increasing, this leads to
    \begin{equation*}
        (\mu' Q^t)(h) - (\mu Q^t)(h) 
        \leq  \EE [h(X'_t) - h(X_t)] \1\{X'_t \preceq X_t\}^c
        \leq \PP \{X'_t \preceq X_t\}^c.
    \end{equation*}
    Since $\tau \leq t$ implies $X'_t \preceq X_t$ we have $\{X'_t \preceq
    X_t\}^c \subset \{\tau > t\}$, and hence
    \begin{equation}
        \label{eq:coupb}
        (\mu' Q^t)(h) - (\mu Q^t)(h) 
        \leq \PP \{\tau > t\}.
    \end{equation}
    Now define $N_t \coloneq \sum_{j=0}^t \1 \{ (X'_t, X_t) \in C \times C \}$.
    Fixing $j \in \NN$ with $j \leq t$, we have
    \begin{equation}
        \label{eq:bd}
        \PP\{ \tau > t \} 
          = \PP\{ \tau > t, N_t < j \} + \PP\{ \tau > t, N_t \geq j \}.
    \end{equation}
    To bound the first term in \eqref{eq:bd}, we set 
    $W(x,x') \coloneq [V(x) + V(x')]/2$. Since $\hat Q((x,x'), \cdot)$ is a
    coupling of $Q(x, \cdot)$ and $Q(x', \cdot)$, we have
    \begin{equation}\label{eq:bqv1}
          \hat Q W(x, x') 
          = \frac{Q V(x) + QV(x')}{2}
          \leq  \lambda W(x, x') + \beta.
    \end{equation}
    Hence, applying Lemma~\ref{l:ggc} to $\hat Q$ yields
    \begin{equation}\label{eq:stb}
       \PP\{ \tau > t, N_t < j \} 
       \leq \PP\{ N_t < j \}
       \leq = \gamma^t d^{j-1} H(\mu, \mu')
    \end{equation}
    Regarding the second term in \eqref{eq:bd}, we claim that
    \begin{equation}
        \label{eq:fb}
        \PP\{ \tau > t,\, N_t \geq j \} \leq (1 - \epsilon)^j.
    \end{equation}
    To see this, suppose $(J_i)_{i \geq 1}$ is the times of the
    successive visits of $(X_t,X_t')$ to $C \times C$.  That is,
        $J_1$ is the time of the first visit and  
    \begin{equation*}
        J_{i+1} \coloneq \inf \setntn{m \geq J_i + 1}{(X_m,X_m')  \in C \times C}
        \quad \text{for } i \geq 1.
    \end{equation*}
    It is not difficult to see that $\{N_t > j \} \subset \{J_j 
    \leq t - 1\}$.  As a result,
    \begin{equation}
        \label{eq:rul}
        \PP \{ \tau > t, \, N_t > j \}
        \leq \PP \{ \tau > t, \, J_j + 1 \leq t\}.
    \end{equation}
    Consider the set $\{ \tau > t, \, J_j + 1 \leq t\}$.  If a path is in
    this set, then as $\tau > t$, for any index $j$ with $j \leq t$ we have 
    $X_j' \npreceq X_j$.  In addition, $J_i + 1 \leq J_j + 1 \leq t$ for any $i
    \leq j$, so $X'_{J_i + 1} \npreceq X_{J_i + 1}$ for every $i \leq j$.
    \begin{equation}
        \label{eq:srul}
        \fore
        \PP \{ \tau > t, \, J_j + 1 \leq t\}
        \leq \PP 
          \cap_{i=1}^j \{X_{J_i + 1}' \npreceq X_{J_i + 1} \}.
    \end{equation}
    Observe that
    \begin{equation*}
        \PP
          \cap_{i=1}^j \{X_{J_i + 1}' \npreceq X_{J_i + 1} \}
        = \PP \left[
        \cap_{i=1}^{j-1} \{X_{J_i + 1}' \npreceq X_{J_i + 1} \} \;
        \PP [ X_{J_j + 1}' \npreceq X_{J_j + 1}  \, | \, \fF_{J_j} ]
          \right].
     \end{equation*}
     By the definition of $J_j$ we have  $(X_{J_j}, X'_{J_j})
     \in C \times C$.  Using this fact, 
     the strong Markov property and the definition of $\hat Q$ (see \eqref{eq:hq}) yields
     \begin{equation*}
        \PP [ X_{J_j + 1}' \preceq X_{J_j + 1}  \, | \, \fF_{J_j} ]
        = 
        \hat Q((X_{J_j}', X_{J_j}), \GG)
        = \alpha(Q(X_{J_j}', \cdot), Q(X_{J_j}, \cdot)).
     \end{equation*}
     Applying the definition of $\epsilon$ in \eqref{eq:defc}, we obtain
     $\PP [ X_{J_j + 1} \npreceq X_{J_j + 1}'  \, | \, \fF_{J_j} ] \leq 1 -
     \epsilon$, so
     \begin{equation*}
         \PP 
          \cap_{i=1}^j \{X_{J_i + 1} \npreceq X_{J_i + 1}' \}
          \leq 
            (1 - \epsilon)
        \PP \
        \cap_{i=1}^{j-1} \{X_{J_i + 1} \npreceq X_{J_i + 1}' \}.
     \end{equation*}
     Continuing to iterate backwards in this way yields
         $\PP 
          \cap_{i=1}^j \{X_{J_i + 1} \npreceq X_{J_i + 1}' \}
          \leq 
            (1 - \epsilon)^j$.
     Combining this inequality with (\ref{eq:rul}) and (\ref{eq:srul})
     verifies (\ref{eq:fb}).

    Combining \eqref{eq:coupb}, \eqref{eq:bd}, \eqref{eq:stb}, and \eqref{eq:fb} yields
    \begin{equation*}
        (\mu' Q^t)(h) - (\mu Q^t)(h) 
           \leq (1 - \epsilon)^j + 
           \gamma^t d^{j-1} \, H(\mu, \mu').
    \end{equation*}
    Reversing the roles of $\mu$ and $\mu'$ does not change the value on the
    right-hand side of this bound, and hence
    \begin{equation*}
        | (\mu' Q^t)(h) - (\mu Q^t)(h) |
           \leq (1 - \epsilon)^j + 
           \gamma^t d^{j-1} \, H(\mu, \mu')
    \end{equation*}
    also holds.  Taking the supremum over all $h \in ib\bB$ with $0 \leq h \leq
    1$ completes the proof
    of Theorem~\ref{t:bk}.
\end{proof}

\section{Examples and Applications}\label{s:sc}

In this section we consider some special cases, with a focus on (a) connections
to the existing literature and (b) how to obtain an estimate of the value
$\epsilon$ in \eqref{eq:defc}.

\subsection{Connection to Total Variation Results}\label{ss:tv}

One special case is when $\preceq$ is the identity order, so
that $x \preceq y$ if and only if $x = y$.  For this order we have $ib\bB =
b\bB$, so every stochastic kernel is increasing, and the Kolmogorov metric (see
\eqref{eq:defk}) becomes the total variation distance. In this setting total
variation setting, Theorem~\ref{t:bk} is similar to standard geometric bounds 
for total variation distance, such as Theorem~1 in \cite{rosenthal2002quantitative}.  

It is worth noting that, in the total variation setting, $\epsilon$ in
\eqref{eq:defc} is at least as large as the analogous term $\epsilon$ in
Theorem~1 in \cite{rosenthal2002quantitative}. Indeed, in
\cite{rosenthal2002quantitative}, the value $\epsilon$, which we now write as
$\hat \epsilon$ to avoid confusion, comes from an assumed minorization
condition: there exists a $\nu \in p\bB$ such that
\begin{equation}\label{eq:mc}
    \hat \epsilon \nu(B) \leq Q(x, B) 
    \quad \text{for all } B \in \bB \text{ and } x \in C.
\end{equation}
To compare $\hat \epsilon$ with $\epsilon$ defined in \eqref{eq:defc}, suppose
that this minorization condition holds and define the residual kernel
$R(x, B) \coloneq (Q(x, B) - \hat \epsilon \nu(B))/(1-\hat \epsilon)$.
Fixing $(x, x') \in C \times C$, we draw $(X, X')$ as follows:
With probability $\epsilon$, draw $X \sim \nu$ and set $X' = X$.
With probability $1-\epsilon$, independently draw $X \sim R(x, \cdot)$ and $X' \sim R(x', \cdot)$.
Simple arguments confirm that $X$ is a draw from $Q(x, \cdot)$ and $X'$ is a
draw from $Q(x', \cdot)$.  Recalling that $\preceq$ is the identity order, this
leads to 
$\hat \epsilon 
    = \PP\{I \leq \hat \epsilon\}
    \leq \PP\{X = X'\} = \PP\{X \preceq X'\} \leq \alpha(Q(x, \cdot), Q(x',
    \cdot))$.
(The last bound is by the definition of $\alpha$ in \eqref{eq:alo} and the fact
that the joint distribution of $(X, X')$ is a
coupling of $Q(x, \cdot)$ and $Q(x', \cdot)$.)
Since, in this discussion, the point $(x,x')$ was arbitrarily chosen from $C
\times C$, we conclude that $\hat \epsilon \leq \epsilon$, where $\epsilon$ is
as defined in \eqref{eq:defc}.

\subsection{Stochastic Recursive Sequences}\label{ss:srs}

The preceding section showed that Theorem~\ref{t:bk} reduces to existing results
for bounds on total variation distance when the partial order $\preceq$ is the
identity order.  Now we show how Theorem~\ref{t:bk} leads to new results other
settings, such as when $\preceq$ is a pointwise partial order.
To this end, consider the process
\begin{equation}\label{eq:srs}
    X_{t+1} = F(X_t, W_{t+1})
\end{equation}
where $(W_t)_{t \geq 1}$ is an {\sc iid} shock process taking values in some
space $\WW$, and $F$ is a measurable function from $\XX \times \WW$ to $\XX$.  The
common distribution of each $W_t$ is denoted by $\phi$.   We suppose that $F$ is
increasing, in the sense that
    $x \preceq x'$ implies $F(x,w) \preceq F(x',w)$
for any fixed $w \in \WW$.  We let $Q$ represent the 
stochastic kernel corresponding to \eqref{eq:srs}, so that
$Q(x,B) = \phi \setntn{w \in \WW}{ F(x,w) \in B}$ for all 
$x \in \XX$ and $B \in \bB$.
Since $F$ is increasing, the kernel $Q$ is increasing.  Hence Theorem~\ref{t:bk}
applies.   We can obtain a lower bound on $\epsilon$ in \eqref{eq:defc} by
calculating
\begin{equation}\label{eq:lbe}
    e \coloneq
    \inf
        \left\{
            \int \int
            \1\{ F(x', w') \leq F(x, w) \} \phi(\diff w) \phi(\diff w')
         \, : \,
         (x, x') \in C \times C
        \right\}.
\end{equation}
Indeed, if $W$ and $W'$ are drawn independently from $\phi$, then 
$X = F(x, W)$ is a draw from $Q(x, \cdot)$ and 
$X' = F(x', W)$ is a draw from $Q(x', \cdot)$.  Hence
\begin{equation}\label{eq:ele}
    e
    = \PP\{X' \preceq X\} 
    \leq \alpha(Q(x, \cdot), Q(x', \cdot)) 
    \leq \epsilon.
\end{equation}

\subsection{Example: TCP Window Size Process}\label{ss:tcp}

To illustrate the method in Section~\ref{ss:srs}, we consider the TCP window
size process (see, e.g., \cite{bardet2013total}), which 
has embedded jump chain 
$X_{t+1} = a (X_t^2 + 2E_{t+1})^{1/2}$.
Here $a \in (0,1)$ and $(E_t)$ is {\sc iid} exponential with unit rate.
If $C = [0, c]$, then drawing $E, E'$ as independent
standard exponentials and using \eqref{eq:ele},
\begin{equation*}
    \epsilon \geq \inf_{0 \leq x, y \leq c}
            \PP\{ a \sqrt{y^2 + 2E'} \leq a \sqrt{x^2 + 2E} \} 
            = \PP\{ \sqrt{c^2 + 2E'} \leq \sqrt{2E} \} .
\end{equation*}
Since $E' - E$ has the Laplace-$(0,1)$ distribution, we get
\begin{equation*}
   1 - \epsilon \leq 
            \PP\{ c^2 + 2E' > 2E \} 
            = \PP\{ E' - E  > c^2/2 \} 
            = \frac{1}{2} \exp(- c^2/2 ). 
\end{equation*}

\subsection{Example: When Minorization Fails}\label{ss:wmf}

We provide an elementary scenario where Theorem~\ref{t:bk} provides a
usable bound while the
minorization based methods described in Section~\ref{ss:tv} do not.
Let $\QQ$ be the rational numbers, let $\XX = \RR$, and assume that
\begin{equation*}
    X_{t+1} = \frac{X_t}{2} + W_{t+1}
    \quad 
    \text{where $W_t$ is {\sc iid} on $\{0,1\}$ and $\PP\{W_t = 0\} = 1/2$}.
\end{equation*}
Let $C$ contain at least one rational and one irrational number.
Let $\mu$ be a measure on the Borel sets of $\RR$ obeying
$\mu(B) \leq Q(x, B) = \PP\{x/2 + W \in B\}$ for all $x \in C$ and Borel sets $B$.
If $x$ is rational, then $x/2 + W \in \QQ$ with probability one, so $\mu(\QQ^c)
\leq Q(x, \QQ^c) = 0$.  Similarly, if $x$ is irrational, then
$x/2 + W \in \QQ^c$ with probability one, so $\mu(\QQ) \leq Q(x, \QQ) = 0$. 
Hence $\mu$ is the zero measure on $\RR$.  Thus, we cannot take
a $\hat \epsilon > 0$ and probability measure $\nu$ obeying the minorization
condition \eqref{eq:mc}.
On the other hand, letting $V(x) = x + 1$ and $d=1$, so that $C = \{V \leq 2\} =
[0, 1]$, the value $e$ from \eqref{eq:lbe} obeys
   $e = \PP\{1/2 + W \leq W'\}
      = \PP\{W' - W \geq 1/2 \} = \frac{1}{4}$.
Hence, by \eqref{eq:ele}, the constant $\epsilon$ in Theorem~\ref{t:bk} is positive.

\subsection{Example: Wealth Dynamics}

Many economic models examine wealth dynamics in the presence of credit market
imperfections (see, e.g., \cite{antunes07start}). These often result in dynamics
of the form
\begin{equation}\label{eq:sigma}
    X_{t+1} = \eta_{t+1} \, G(X_t) + \xi_{t+1},
    \quad (\eta_t) \iidsim \phi,
    \quad (\xi_t) \iidsim \psi.
\end{equation}
Here $(X_t)$ is some measure of household wealth, $G$ is a 
function from $\RR_+$ to itself and $(\eta_t)$ and $(\xi_t)$ are independent
$\RR_+$-valued sequences. The function $G$ is increasing, since greater current
wealth relaxes borrowing constraints and increases financial income.
We assume that there exists a $\lambda < 1$ such that $\EE \,
\eta_t G(x) \leq \lambda x$ for all $x \in \RR_+$,  and, in addition, that
$\bar \xi \coloneq \EE \xi_t < \infty$.

Let $Q$ be the stochastic kernel corresponding to \eqref{eq:sigma}.  With $V(x)
= x + 1$, we have
\begin{equation}\label{eq:fd}
    QV(x) 
    = \EE [ \eta_{t+1} \, G(x) + \xi_{t+1} + 1 ]
    \leq \lambda x + \bar \xi + 1
    \leq \lambda V(x) + \bar \xi + 1.
\end{equation}

Fixing $d \in \RR_+$ and setting $C = \{V \leq d\} = [0, d]$, we can obtain $e$
in \eqref{eq:lbe} via
\begin{equation*}
    e 
    = \PP
    \{\eta' G(d) + \xi' \leq \eta G(0) + \xi\}
    \quad \text{when } \quad (\eta', \xi', \eta, \xi) \sim \phi \times \psi \times \phi \times \psi.
\end{equation*}
This term will be strictly positive under suitable conditions,
such as when $\psi$ has a sufficiently large support.
Combining \eqref{eq:ele} and \eqref{eq:fd} with the bound in Theorem~\ref{t:bk},
we have, for any $\mu$ and $\mu'$ in $p\bB$ and $j, t \in \NN$ with $j \leq t$, 
\begin{equation*}
    \kappa( \mu Q^t, \mu' Q^t)
      \leq 
          (1-e)^j + 
          \frac{1}{2}\left(\lambda + \frac{2 \bar (\xi + 1)}{d}\right)^t 
         d^{j-1} 
        H(\mu, \mu').
\end{equation*}
where $H(\mu, \mu') \coloneq \left(\int x \mu(\diff x) + \int x \mu'(\diff x) \right)/2$.

Notice that, for this model, the lack of smooth mixing and continuity implies
that neither total variation nor Wasserstein distance bounds can be computed
without additional assumptions.

\bibliographystyle{plain}
\bibliography{localbib}

\begin{thebibliography}{10}

\bibitem{antunes07start}
Antonio Antunes and Tiago Cavalcanti.
\newblock Start up costs, limited enforcement, and the hidden economy,.
\newblock {\em European Economic Review}, 51:203--224, 2007.

\bibitem{bardet2013total}
Jean-Baptiste Bardet, Alejandra Christen, Arnaud Guillin, Florent Malrieu, and Pierre-Andr{\'e} Zitt.
\newblock Total variation estimates for the {TCP} process.
\newblock {\em Electron. J. Probab}, 18(10):1--21, 2013.

\bibitem{bhattacharya1999theorem}
Rabi Bhattacharya and Mukul Majumdar.
\newblock On a theorem of {D}ubins and {F}reedman.
\newblock {\em Journal of Theoretical Probability}, 12:1067--1087, 1999.

\bibitem{bhattacharya2010limit}
Rabi Bhattacharya, Mukul Majumdar, and Nigar Hashimzade.
\newblock Limit theorems for monotone {M}arkov processes.
\newblock {\em Sankhya A}, 72:170--190, 2010.

\bibitem{bhattacharya1988asymptotics}
Rabi~N Bhattacharya and Oesook Lee.
\newblock Asymptotics of a class of {M}arkov processes which are not in general irreducible.
\newblock {\em The Annals of Probability}, pages 1333--1347, 1988.

\bibitem{chafai2010long}
Djalil Chafa{\"\i}, Florent Malrieu, and Katy Paroux.
\newblock On the long time behavior of the {TCP} window size process.
\newblock {\em Stochastic Processes and their Applications}, 120(8):1518--1534, 2010.

\bibitem{foss2018stochastic}
Sergey Foss, Vsevolod Shneer, Jonathan~P Thomas, and Tim Worrall.
\newblock Stochastic stability of monotone economies in regenerative environments.
\newblock {\em Journal of Economic Theory}, 173:334--360, 2018.

\bibitem{foss2004overview}
Serguei Foss and Takis Konstantopoulos.
\newblock An overview of some stochastic stability methods.
\newblock {\em Journal of the Operations Research Society of Japan}, 47(4):275--303, 2004.

\bibitem{gaudio2021exponential}
Julia Gaudio, Saurabh Amin, and Patrick Jaillet.
\newblock Exponential convergence rates for stochastically ordered {M}arkov processes with random initial conditions.
\newblock {\em arXiv preprint arXiv:1810.07732v1}, 202118.

\bibitem{gaunt2023bounding}
Robert~E Gaunt and Siqi Li.
\newblock Bounding {K}olmogorov distances through {W}asserstein and related integral probability metrics.
\newblock {\em Journal of Mathematical Analysis and Applications}, 522(1):126985, 2023.

\bibitem{jiang2021coupling}
Yu~Hang Jiang, Tong Liu, Zhiya Lou, Jeffrey~S Rosenthal, Shanshan Shangguan, Fei Wang, and Zixuan Wu.
\newblock The coupling/minorization/drift approach to {M}arkov chain convergence rates.
\newblock {\em Notices of the American Mathematical Society}, 68(4), 2021.

\bibitem{kamihigashi2014stochastic}
Takashi Kamihigashi and John Stachurski.
\newblock Stochastic stability in monotone economies.
\newblock {\em Theoretical Economics}, 9(2):383--407, 2014.

\bibitem{kamihigashi2019unified}
Takashi Kamihigashi and John Stachurski.
\newblock A unified stability theory for classical and monotone {M}arkov chains.
\newblock {\em Journal of Applied Probability}, 56(1):1--22, 2019.

\bibitem{kamihigashi2020partial}
Takashi Kamihigashi and John Stachurski.
\newblock Partial stochastic dominance via optimal transport.
\newblock {\em Operations Research Letters}, 48(5):584--586, 2020.

\bibitem{lindvall2002lectures}
Torgny Lindvall.
\newblock {\em Lectures on the coupling method}.
\newblock Dover, 2002.

\bibitem{lund1996computable}
Robert~B Lund, Sean~P Meyn, and Richard~L Tweedie.
\newblock Computable exponential convergence rates for stochastically ordered {M}arkov processes.
\newblock {\em The Annals of Applied Probability}, 6(1):218--237, 1996.

\bibitem{montenegro2006mathematical}
Ravi Montenegro and Prasad Tetali.
\newblock Mathematical aspects of mixing times in {M}arkov chains.
\newblock {\em Foundations and Trends in Theoretical Computer Science}, 1(3):237--354, 2006.

\bibitem{qin2022geometric}
Qian Qin and James~P Hobert.
\newblock Geometric convergence bounds for {M}arkov chains in {W}asserstein distance based on generalized drift and contraction conditions.
\newblock 58(2):872--889, 2022.

\bibitem{qu2023computable}
Yanlin Qu, Jose Blanchet, and Peter Glynn.
\newblock Computable bounds on convergence of {M}arkov chains in {W}asserstein distance.
\newblock {\em arXiv preprint arXiv:2308.10341}, 2023.

\bibitem{rosenthal2002quantitative}
Jeffrey~S Rosenthal.
\newblock Quantitative convergence rates of {M}arkov chains: A simple account.
\newblock {\em Electronic Communications in Probability}, 7:123--128, 2002.

\bibitem{rosenthal2023markov}
Jeffrey~S Rosenthal.
\newblock How {M}arkov’s little idea transformed statistics.
\newblock {\em Handbook of the History and Philosophy of Mathematical Practice}, pages 1--11, 2023.

\bibitem{strassen1965existence}
Volker Strassen.
\newblock The existence of probability measures with given marginals.
\newblock {\em The Annals of Mathematical Statistics}, 36(2):423--439, 1965.

\bibitem{zhang2000existence}
Shaoyi Zhang.
\newblock Existence and application of optimal {M}arkovian coupling with respect to non-negative lower semi-continuous functions.
\newblock {\em Acta Mathematica Sinica}, 16(2):261--270, 2000.

\end{thebibliography}

\appendix

\section{Proof of Lemma~\ref{l:ggc}}

Let the conditions of Lemma~\ref{l:ggc} hold and let $Q$, $\hat Q$ and
$((X_t, X'_t))_{t \geq 0}$ be as described above.  We assume in what follows
that $H(\mu, \mu')$ is finite, since otherwise Lemma~\ref{l:ggc} is trivial. 
Setting $W(x, x') \coloneq (V(x)+V(x'))/2$, we have
\begin{equation}\label{eq:wdr}
    \hat Q W (x, x') = 
      \frac{Q V(X_t) + Q V(X'_t)}{2}
      \leq \lambda W (x, x') + \beta.
\end{equation}
Now define
\begin{equation*}
    M_t \coloneq \gamma^{-t} d^{-N_{t-1}} W(X_t, X_t')
    \quad \text{for } t \geq 0 \text{ with }
    N_{-1} \coloneq 1.
\end{equation*}
We claim that $(M_t)_{t \geq 0}$ is an $(\fF_t)$-supermartingale. Clearly $(M_t)$ is adapted.
To see that $\EE[ M_{t+1} \given \fF_t] \leq M_t$
holds $\PP$-almost surely,\footnote{This inequality implies
integrability of $M_t$ because then $\EE M_t \leq \EE M_0$ and $\EE M_0 =
H(\mu, \mu')$ is finite by assumption.} let $F_t \coloneq \1 \{ (X_t, X_t') \in C \times C \}$, and
let $F_t^c \coloneq 1 - F_t$, so that 
\begin{equation}
    \label{eq:bce0}
    \EE[ M_{t+1} \given \fF_t]
      =  \EE [ M_{t+1} F_t \given \fF_t] +  \EE [ M_{t+1} F_t^c \given
      \fF_t].
\end{equation}
On $C \times C$ we have $W \leq d$, so, by \eqref{eq:wdr}, $\EE [W(X_{t+1},X'_{t+1}) \given \fF_t]  \leq \lambda d + b = d \gamma$.
Therefore,
\begin{equation*}
    \EE [ M_{t+1} F_t \given \fF_t]
       = \gamma^{-(t+1)} d^{-N_t} 
       \EE [W(X_{t+1}, X'_{t+1}) \given \fF_t ] F_t
       \leq \gamma^{-t} d^{-N_t + 1} F_t. 
\end{equation*}
Also, on $F_t$ we have $N_t = N_{t-1} + 1$.  Using this fact and $1 \leq W$ yields
\begin{equation}\label{eq:mg1}
    \EE [ M_{t+1} F_t \given \fF_t]
    \leq \gamma^{-t}  d^{-N_{t-1}} F_t 
       \leq M_t F_t .
\end{equation}

Turning to the term $\EE [ M_{t+1} F_t^c \given \fF_t]$,   observe that
on $F_t^c$ we have $W \geq d/2$, so, using \eqref{eq:bqv1} again,
\begin{equation*}
  \frac{\hat Q W}{W}
  \leq
        \lambda  + \frac{\beta}{W} 
  \leq  
      \lambda  + \frac{2\beta}{d} 
  = \gamma .
\end{equation*}
Therefore, $\EE[ W(X_{t+1}, X'_{t+1}) F_t^c \given \fF_t] \leq \gamma
W(X_t,X'_t) F_t^c$. Combining this bound with the fact that 
$N_t = N_{t-1}$ on $F_t^c$ yields
\begin{equation*}
  \EE[ M_{t+1} F_t^c \given \fF_t]
      = \gamma^{-(t+1)} d^{-N_t} \EE [W(X_{t+1},X'_{t+1}) \given \fF_t]F_t^c
      \leq \gamma^{-t} d^{-N_{t-1}} F_t^c
      \leq M_t F_t^c,
\end{equation*}
where the last inequality used $1 \leq W$.
Together with \eqref{eq:mg1}  and \eqref{eq:bce0}, this inequality gives $\EE [
M_{t+1} \given \fF_t] \leq M_t$, so $(M_t)$ is a supermartingale as claimed.

Now fix $t \in \NN$ and $j \leq t$.  Since $d \geq 1$ we have
\begin{equation*}
   \PP\{ N_t < j \}
   \leq \PP\{ N_{t-1} \leq j-1 \}
   = \PP\{ d^{-N_{t-1}} \geq d^{-j+1} \}.
\end{equation*}
From Chebychev's inequality, $1 \leq W$ and the supermartingale property,
the last term is dominated by
\begin{equation*}
   d^{j-1} \EE d^{-N_{t-1}}
   \leq \gamma^t d^{j-1} \EE [ \gamma^{-t} d^{-N_{t-1}} W(X_t, X'_t) ]
   = \gamma^t d^{j-1} \EE [ M_t ] 
   \leq \gamma^t d^{j-1} \EE [ M_0 ] .
\end{equation*}
The last term is just
$\gamma^t d^{j-1} H(\mu, \mu')$,
so the claim in Lemma~\ref{l:ggc} is now proved.

\end{document}